\documentclass[11pt]{amsart}
\usepackage[top=30truemm,bottom=30truemm,left=30truemm,right=30truemm]{geometry} 
\usepackage{amsmath, amssymb, array}
\usepackage[all]{xy}
\usepackage{ascmac}
\usepackage[dvips]{graphicx}
\usepackage{color}
\usepackage{amscd}
\usepackage[driverfallback=dvipdfm]{hyperref}
\usepackage{amsrefs}
\usepackage{cleveref}
\usepackage{shuffle}
\usepackage{mathrsfs}
\usepackage{listings}
\usepackage{comment}

\newfont{\cyr}{wncyr10 scaled\magstep0}

\pagestyle{plain}

\newcommand{\Q}{\mathbb{Q}}
\newcommand{\Z}{\mathbb{Z}}

\DeclareMathOperator{\Tr}{Tr} 


\theoremstyle{plain}
 \newtheorem{theorem}{Theorem}[section]
 \crefname{theorem}{Theorem}{Theorems}
 
 \crefname{proposition}{Proposition}{Propositions}
 \newtheorem{lemma}[theorem]{Lemma}
 \crefname{lemma}{Lemma}{Lemmas}
 \newtheorem{corollary}[theorem]{Corollary}
 \crefname{corollary}{Corollary}{Corollaries}
 
 \crefname{conjecture}{Conjecture}{Conjectures}
 
 \crefname{hypothesis}{Hypothesis}{Hypotheses}
 
 \crefname{question}{Question}{Questions}
 
 \crefname{problem}{Problem}{Problems}

\theoremstyle{definition} 
 
 \crefname{definition}{Definition}{Definitions}
 
 \crefname{example}{Example}{Examples}
 \newtheorem{remark}[theorem]{Remark}
 \crefname{remark}{Remark}{Remarks}

\title{A note on the Diophantine equations $2ln^{2} = 1+q+ \cdots +q^{\alpha}$ and application to odd perfect numbers}
\author{Yoshinosuke Hirakawa}
\address[Yoshinosuke Hirakawa]{Department of Mathematics \\ Faculty of Science and Technology \\ Tokyo University of Science, 2641, Yamazaki, Noda, Chiba, Japan}
\email{hirakawa\_yoshinosuke@rs.tus.ac.jp \\ hirakawa@keio.jp}
	\thanks{This research was supported by KAKENHI 21K13779.}
	\subjclass[2010]{primary 11A41; 
	secondary
		11A05; 
		11D41; 
		11R11; 
		11R27; 
		11G30; 
		}
\keywords{perfect numbers; multiply perfect numbers; Diophantine equations; generalized Fermat equations}

\begin{document}


\maketitle

\begin{abstract}
Let $N$ be an odd perfect number. Then, Euler proved that there exist some integers $n, \alpha$ and a prime $q$ such that $N = n^{2}q^{\alpha}$, $q \nmid n$, and $q \equiv \alpha \equiv 1 \bmod 4$.
In this note, we prove that the ratio $\frac{\sigma(n^{2})}{q^{\alpha}}$ is neither a square nor a square times a single prime unless $\alpha = 1$.
It is a direct consequence of a certain property of the Diophantine equation $2ln^{2} = 1+q+ \cdots +q^{\alpha}$, where $l$ denotes one or a prime,
and its proof is based on the prime ideal factorization in the quadratic orders $\mathbb{Z}[\sqrt{1-q}]$ and the primitive solutions of generalized Fermat equations $x^{\beta}+y^{\beta} = 2z^{2}$.
We give also a slight generalization to odd multiply perfect numbers.
\end{abstract}



\section{Main result}

Let $N$ be a perfect number,
i.e., $\sigma(N) = 2N$,
where $\sigma(N)$ is the sum of the divisors of $N$.
Euler proved that
the even perfect numbers $N$ correspond bijectively to the Mersenne primes $q = 2^{p}-1$
as follows:
\[
	N
	= 2^{p-1}q.
\]
Historically, it was already known in Euclide's era that such $N$ is a perfect number.

In what follows,
we assume that $N$ is an odd perfect number.
Then, Euler proved that
there exist some integers $n, \alpha$ and a prime $q$ such that
\[
	N = n^{2}q^{\alpha},
	\quad q \nmid n,
	\quad \text{and}
	\quad q \equiv \alpha \equiv 1 \bmod 4.
\]
For its proof, see e.g.\ \cite[Theorem 2.2]{Chen-Luo}.
Since the quantity
\[
	\gcd(n^{2}, \sigma(n^{2}))
	= \gcd\left( \frac{\sigma(q^{\alpha})}{2} \cdot \frac{\sigma(n^{2})}{q^{\alpha}}, \frac{\sigma(n^{2})}{q^{\alpha}} \right)
	= \frac{\sigma(n^{2})}{q^{\alpha}},
\]
which is called the \emph{index} of $N$ in \cite{Gallardo},
is a counterpart of the quantity
\[
	\gcd(2^{p-1}, \sigma(2^{p-1}))
	= \frac{\sigma(2^{p-1})}{q}
	= 1
\]
for even $N$,
it is natural to ask whether $\frac{\sigma(n^{2})}{q^{\alpha}} = 1$ also for odd $N$ (e.g.\ \cite{Suryanarayana}).
The purpose of this note is to give an answer to this question in a more precise form as follows.

\begin{theorem} \label{2-perfect}
Suppose that $N = n^{2}q^{\alpha}$ is an odd perfect number as above.
\begin{enumerate}
\item
If $\alpha = 1$, then $\frac{\sigma(n^{2})}{q} > 1$.  

\item
If $\alpha > 1$, then $\frac{\sigma(n^{2})}{q^{\alpha}}$ is neither a square nor a square times a single prime.
\end{enumerate}
\end{theorem}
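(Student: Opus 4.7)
The plan is to reduce both parts of the theorem to the analysis of the integers $s:=\sigma(q^{\alpha})/2$ and $t:=\sigma(n^{2})/q^{\alpha}$. First I would check that these are well-defined positive integers satisfying $n^{2}=st$: the congruences $q\equiv\alpha\equiv 1\bmod 4$ give $\sigma(q^{\alpha})\equiv\alpha+1\equiv 2\bmod 4$, so $s$ is an odd integer, and $\sigma(q^{\alpha})\equiv 1\bmod q$ gives $\gcd(s,q)=1$; rewriting $\sigma(N)=2N$ as $\sigma(n^{2})\cdot s=n^{2}q^{\alpha}$ then forces $q^{\alpha}\mid\sigma(n^{2})$, whence $t\in\mathbb{Z}_{>0}$ and $n^{2}=st$.

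For part (1) I would argue by contradiction: if $\alpha=1$ and $t=1$, then $\sigma(n^{2})=q$ is prime, so by multiplicativity of $\sigma$ the integer $n$ is a prime power $p^{a}$ with $p$ odd and $\sigma(p^{2a})=q$; at the same time $n^{2}=s=(q+1)/2$ gives $q=2p^{2a}-1$. Substituting into $(p^{2a+1}-1)/(p-1)=q$ and simplifying yields the identity $(p-2)(p^{2a}-1)=0$, which forces $p=2$ or $a=0$, both impossible since $n$ is odd and $n>1$.

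For part (2) I would first observe that if $t=u^{2}$ is a square, then $u\mid n$ (because $u^{2}\mid n^{2}$ for positive integers), so $s=(n/u)^{2}$ is also a square, say $s=m^{2}$; hence $2m^{2}=1+q+\cdots+q^{\alpha}$ with $q,\alpha\equiv 1\bmod 4$, $q$ prime, and $\alpha>1$. The plan is then to invoke the paper's main Diophantine lemma, which rules out exactly such a solution. To prove the lemma I would follow the abstract's hint: write $\alpha+1=2\beta$ with $\beta\geq 3$ odd, factor $1+q+\cdots+q^{\alpha}=T\cdot(q^{\beta}+1)$ where $T=1+q+\cdots+q^{\beta-1}$, and use $\gcd(q^{\beta}-1,q^{\beta}+1)=2$ together with parity to show that $T$ and $(q^{\beta}+1)/2$ are coprime odd positive integers, thereby reducing the problem to the system $T=A^{2}$ and $q^{\beta}+1=2B^{2}$. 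The core step would then analyze $q^{\beta}+1=2B^{2}$ via the prime ideal factorization in the quadratic order $R=\mathbb{Z}[\omega]$ with $\omega=\sqrt{1-q}$, where $q$ splits as $(1-\omega)(1+\omega)$.

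This last step is where I expect the main obstacle: $R$ is typically non-maximal (its conductor in the maximal order of $\mathbb{Q}(\sqrt{1-q})$ depends on the squarefree part of $q-1$), so the ideal arithmetic and the distinction between primes of $R$ and those of the maximal order must be controlled carefully in order to extract a contradiction from $\beta$ being odd and at least $3$. Once the Diophantine lemma is in hand, the final assertion is immediate: $\sigma(n^{2})=q^{\alpha}$ is equivalent to $t=1$, which is excluded by part (1) when $\alpha=1$ and by part (2) when $\alpha>1$ (since $1$ is a perfect square).
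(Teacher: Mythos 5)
Your part (1) is correct and takes a genuinely different route from the paper: the paper observes that $\sigma(n^{2})=q$ forces $\omega(n)=1$ and then appeals to Nielsen's deep bound $\omega(n)\geq 9$, whereas your identity $(p-2)(p^{2a}-1)=0$, obtained by combining $\sigma(p^{2a})=q$ with $n^{2}=(q+1)/2$, kills this case by elementary algebra alone. That is a self-contained improvement. Your setup for part (2) also matches the paper: the passage from ``$t$ is a square'' to $2m^{2}=\sigma(q^{\alpha})$ is exactly the paper's reduction to its main Diophantine theorem, and your further splitting into the coprime system $T=(q^{\beta}-1)/(q-1)=A^{2}$ and $q^{\beta}+1=2B^{2}$ is equivalent to the paper's first step (the paper writes the first equation as $(q-1)n_{1}^{2}=q^{\beta}-1$).

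However, the heart of part (2) --- actually deriving a contradiction --- is absent, and this is where all the content of the paper lies. Two concrete issues. First, you aim the quadratic-order analysis at the wrong member of the pair: the norm form of $\Z[\sqrt{1-q}]$ is $x^{2}+(q-1)y^{2}$, so it is the equation $T=A^{2}$, rewritten as $1+(q-1)A^{2}=q^{\beta}$, i.e.\ $(1+A\sqrt{1-q})(1-A\sqrt{1-q})=q^{\beta}=\bigl((1+\sqrt{1-q})(1-\sqrt{1-q})\bigr)^{\beta}$, that factors over this order; the equation $q^{\beta}+1=2B^{2}$ gives $q^{\beta}=2B^{2}-1$, which is not visibly a norm from $\Z[\sqrt{1-q}]$, so your proposed factorization has nothing to act on. Second, even with the correct equation, the non-maximality of the order is not the real difficulty (the primes above $q$ are coprime to the conductor, which divides $2(1-q)$, so unique factorization of ideals away from the conductor applies); the missing work is the endgame: (i) conclude $1+A\sqrt{1-q}=\epsilon(1+\sqrt{1-q})^{v}(1-\sqrt{1-q})^{\beta-v}$ with $\epsilon=\pm 1$ and rule out $0<v<\beta$ because the right-hand side would then be divisible by $q$; (ii) take the trace to get $2=\epsilon\Tr_{\Q(\sqrt{1-q})/\Q}\bigl((1\pm\sqrt{1-q})^{\beta}\bigr)$, expand binomially, and deduce $\epsilon=1$ from reduction modulo $q-1$; (iii) divide the resulting vanishing sum by its leading binomial coefficient and observe that the remainder is a $2$-adic unit when $q\equiv 1\bmod 4$, a contradiction. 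You explicitly flag this stage as an expected obstacle and supply none of (i)--(iii), so the proof of part (2), and hence of the final assertion for $\alpha>1$, is incomplete.
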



\begin{remark} \label{DHP1}
In \cite[Theorem 1]{Dandapat-Hunsucker-Pomerance},
Dandapat, Hunsucker, and Pomerance proved that
$\frac{\sigma(n^{2})}{q^{\alpha}} > 1$ for every $\alpha$
with some variants for multiply perfect numbers (see \cref{DHP}).
More recently, Gallardo \cite[Proposition 2]{Gallardo} showed that 
if $\alpha > 1$, then $\frac{\sigma(n^{2})}{q^{\alpha}}$ is not a square.
\cref{2-perfect} gives a refinement of these works at least partially.
\end{remark}

\section{Proof}

\subsection{The case of $\alpha = 1$}

\cref{2-perfect} for $\alpha = 1$ is well-known.
Let $\omega(n)$ be the number of distinct prime divisors of $n$
and $n = \prod_{i = 1}^{\omega(n)} p_{i}^{\alpha_{i}}$ with primes $p_{i}$ and integers $\alpha_{i} > 0$.
Then, the equality $q = \sigma(n^{2})$ implies that
\[
	q = \prod_{i = 1}^{\omega(n)} (1+p_{i}+\cdots+p_{i}^{2\alpha_{i}}),
\]
and hence $\omega(n) = 1$.
However, it contradicts the known lower bound e.g.\ $\omega(n) \geq 9$ due to Nielsen \cite[Theorem 3.8]{Nielsen}.

In fact,
we can deduce a huge lower bound for $\frac{\sigma(n^{2})}{q}$
from known results on the divisors of $n$;
for example, a single inequality $\omega(n) \geq 9$ implies that
\[
	\frac{\sigma(n^{2})}{q}
	\geq \sigma(3^{2}) \cdot \sigma(5^{2}) \cdot \sigma(7^{2}) \cdot \sigma(11^{2})
		\cdot \sigma(13^{2}) \cdot \sigma(17^{2}) \cdot \sigma(19^{2}) \cdot \sigma(23^{2})
	= 36163554870725919.
\]
For more discussion, see e.g.\ \cite{Nielsen}.

\subsection{The case of $\alpha > 1$}

Suppose that $\frac{\sigma(n^{2})}{q^{\alpha}} = ln_{0}^{2}$,
where $l$ is one or a prime and $n_{0}$ is an integer.
Then, since $2l(\frac{n}{ln_{0}})^{2} = \sigma(q^{\alpha})$,
\cref{2-perfect} is a consequence of the following property 
of the Diophantine equations $2ln^{2} = \sigma(q^{\alpha})$.

\begin{theorem} \label{main1}
There exist no triples $(\alpha, n, q)$ of integers such that
$\alpha > 1$, $q \equiv 1 \bmod 4$ is a prime, and $2n^{2} = \sigma(q^{\alpha})$.
\end{theorem}
\begin{theorem} \label{main2}
There exist no quartets $(\alpha, n, q, l)$ of integers such that
$\alpha > 3$, $q \equiv 1 \bmod 4$ is a prime, $l$ is a prime, and $2ln^{2} = \sigma(q^{\alpha})$.
\end{theorem}

First, we prove \cref{main1} by contradiction.

\begin{proof}[Proof of \cref{main1}]
Suppose that $2n^{2} = \sigma(q^{\alpha})$.
We may assume that $\alpha$ is odd and $\alpha \geq 3$.
In this case, the above equation can be rephrased as follows:
\[
	2(q-1)n^{2}
	= (q^{\frac{\alpha+1}{2}}-1)(q^{\frac{\alpha+1}{2}}+1).
\]
Since $q-1$ divides $q^{\frac{\alpha+1}{2}}-1$, $2$ divides $q^{\frac{\alpha+1}{2}}+1$, and 
\[
	\gcd(q^{\frac{\alpha+1}{2}}-1, q^{\frac{\alpha+1}{2}}+1)
	= \gcd(q^{\frac{\alpha+1}{2}}-1, 2)
	= 2,
\]
there exist some (coprime) divisors $n_{1}, n_{2}$ of $n$ such that 
\[
	\begin{cases}
	(q-1)n_{1}^{2} = q^{\frac{\alpha+1}{2}}-1, \\
	2n_{2}^{2} = q^{\frac{\alpha+1}{2}}+1.
	\end{cases}
\]
In particular, the former equation implies that
\[
	(1+n_{1}\sqrt{1-q})(1-n_{1}\sqrt{1-q})
	= q^{\frac{\alpha+1}{2}}
	= (1+\sqrt{1-q})^{\frac{\alpha+1}{2}}(1-\sqrt{1-q})^{\frac{\alpha+1}{2}}.
\]
Since $1 \pm n_{1}\sqrt{1-q}$ are coprime in the ring $\Z[\sqrt{1-q}]$
and $1 \pm \sqrt{1-q}$ are prime elements of $\Z[\sqrt{1-q}]$ above $q$,
the uniqueness of the prime ideal decomposition outside the prime ideals dividing $1-q$ (see e.g.\ \cite[\S7]{Cox})
implies that
there exists some non-negative integer $v$ such that
\[
	1+n_{1}\sqrt{1-q}
	= \epsilon(1+\sqrt{1-q})^{v}(1-\sqrt{1-q})^{\frac{\alpha+1}{2}-v},
\]
where $\epsilon \in \Z[\sqrt{1-q}]^{\times} = \{ \pm1 \}$.
Moreover,
if $0 < v < \frac{\alpha+1}{2}$,
then $1+n_{1}\sqrt{1-q}$ must be divisible by $q$,
a contradiction.
Therefore, $v = 0$ or $\frac{\alpha+1}{2}$.
In any case,
we obtain the following equality
\[
	2
	= \epsilon\Tr_{\Q(\sqrt{1-q})/\Q}((1 \pm \sqrt{1-q})^{\frac{\alpha+1}{2}})
	= 2\epsilon + 2\epsilon(1-q)\sum_{i = 1}^{\left[ \frac{\alpha+1}{4} \right]} \binom{\frac{\alpha+1}{2}}{2i} 1^{\frac{\alpha+1}{2}-2i} (1-q)^{i-1},
\]
which implies that
\[
	1 \equiv \epsilon \bmod q-1,
	\quad \text{i.e.,} \quad
	\epsilon = 1.
\]
Therefore,
by combining it with the assumption that $\alpha \geq 3$,
we obtain the following identity
\[
	0
	= 1 + \sum_{i = 2}^{\left[ \frac{\alpha+1}{4} \right]} \frac{\binom{\frac{\alpha+1}{2}}{2i}}{\binom{\frac{\alpha+1}{2}}{2}} (1-q)^{i-1}
	= 1 + \sum_{i = 2}^{\left[ \frac{\alpha+1}{4} \right]} \frac{\binom{\frac{\alpha-3}{2}}{2i-2}}{2i-1} \cdot \frac{(1-q)^{i-1}}{i}.
\]
However,
the right hand side is a $2$-adic unit whenever $q \equiv 1 \bmod 4$,
a contradiction.
\end{proof}

In the above proof,
we obtain the following by-product.

\begin{theorem} \label{by-product}
There exist no triples $(\beta, n, q)$ of integers such that
$\beta \geq 1$, $q \equiv 1 \bmod 4$ is a prime, and $n^{2} = \sigma(q^{\beta})$.
\end{theorem}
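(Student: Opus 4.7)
The plan is to mimic the proof of \cref{main} but apply the same machinery directly to the equation $n^{2} = \sigma(q^{\beta})$, without the preliminary halving step that was needed there. First I would rewrite the equation as $(q-1)n^{2} = q^{\beta+1} - 1$, i.e.\ $1 - n^{2}(1-q) = q^{\beta+1}$, which recasts as
\[
    (1 + n\sqrt{1-q})(1 - n\sqrt{1-q}) = q^{\beta+1} = (1+\sqrt{1-q})^{\beta+1}(1-\sqrt{1-q})^{\beta+1}
\]
in the quadratic order $\Z[\sqrt{1-q}]$. The two conjugate factors on the left sum to $2$ and are therefore coprime above any prime not dividing $1-q$, while $1 \pm \sqrt{1-q}$ are the prime elements above $q$. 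Invoking the uniqueness of prime ideal factorization outside the conductor (as cited from \cite[\S7]{Cox}), exactly as in the proof of \cref{main}, I would obtain
\[
    1 + n\sqrt{1-q} = \epsilon (1 + \sqrt{1-q})^{v}(1 - \sqrt{1-q})^{\beta+1-v}
\]
for some $\epsilon \in \Z[\sqrt{1-q}]^{\times} = \{\pm 1\}$; any intermediate exponent $0 < v < \beta+1$ would force $q \mid 1+n\sqrt{1-q}$, so $v \in \{0, \beta+1\}$.

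Taking the trace to $\Q$ and reducing modulo $q-1$ pins down $\epsilon = 1$ in the same way as before, after which the equation rearranges to
\[
    \sum_{i=1}^{\lfloor (\beta+1)/2 \rfloor} \binom{\beta+1}{2i} (1-q)^{i} = 0.
\]
The small cases $\beta \in \{1,2\}$ are immediate: the sum collapses to the single term $\binom{\beta+1}{2}(1-q)$, which forces $q=1$, absurd. For $\beta \geq 3$ I would divide through by $(1-q)\binom{\beta+1}{2}$ and use the identity $\binom{\beta+1}{2i}/\binom{\beta+1}{2} = \binom{\beta-1}{2i-2}/(i(2i-1))$ to obtain
\[
    1 + \sum_{i=2}^{\lfloor (\beta+1)/2 \rfloor} \frac{\binom{\beta-1}{2i-2}}{i(2i-1)}(1-q)^{i-1} = 0.
\]
Since $q \equiv 1 \bmod 4$ gives $4 \mid 1-q$, each summand with $i \geq 2$ has $2$-adic valuation at least $2(i-1) - v_{2}(i) \geq 1$ (the denominator $2i-1$ is odd, and for $i \geq 2$ one checks $v_{2}(i) \leq i-1 \leq 2(i-1) - 1$). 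Hence the left-hand side is a $2$-adic unit, contradicting its vanishing.

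The algebraic portion — the factorization in $\Z[\sqrt{1-q}]$ and the exclusion of intermediate exponents — is structurally identical to the proof of \cref{main} and so should require essentially no new work. The main place to be careful is the bookkeeping of $2$-adic valuations for every $i \geq 2$ simultaneously, and the separate treatment of the degenerate small cases $\beta \in \{1,2\}$ where the post-division identity has an empty sum. I do not anticipate a substantive obstacle beyond this.
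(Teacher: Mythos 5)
Your proposal is correct and is essentially the paper's own argument: the paper derives \cref{by-product} as the sub-argument inside the proof of \cref{main} applied to $(q-1)n_{1}^{2} = q^{\frac{\alpha+1}{2}}-1$, and you have simply run that same factorization in $\Z[\sqrt{1-q}]$, trace, and $2$-adic valuation computation with $m = \beta+1$ in place of $m = \frac{\alpha+1}{2}$. The bookkeeping (unit determination, the binomial identity, and the small cases with empty sum) all checks out.
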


\begin{remark}
The Diophantine equation
\[
	n^{\alpha} = \frac{q^{\beta+1}-1}{q-1},
	\quad \text{in integers $(\alpha, \beta, n, q)$ such that $\alpha, \beta, \lvert n \rvert, \lvert q \rvert \geq 2$}
\]
is known as the Nagell-Ljunggren equation.
For  $\alpha = 2$,
Ljunggren \cite{Ljunggren} determined its integral solutions $(\beta, n, q)$ as follows:
\[
	(\beta, n, q) = (3, 20, 7), (4, 11, 3),
\]
which implies \cref{by-product} and hence \cref{main1}.
Moreover, there are only four known integral solutions
\[
	(\alpha, \beta, n, q) = (2, 3, 20, 7), (2, 4, 11, 3), (3, 2, 7, 18), (3, 2, 7, -19),
\]
which are conjectured to be the all of the solutions.
For more information, see e.g.\ \cite{Bennett-Levin,Bugeaud-Mignotte} and references therein.
The rational solutions $(n, q)$ are also studied in literature (see e.g.\ \cite{Ivorra_cyclotomic} for $\alpha = 2$ and $\beta = 10, 12, 16$),
but the complete solution seems to be unknown even if $\alpha = 2$.
\end{remark}

Next, we prove \cref{main2}.
The core of our proof is the following lemma whose proof is based on Chabauty method \cite{Bruin_thesis} and Frey curve method \cite{Ivorra_pp2}.

\begin{lemma} \label{ternary}
There exist no integers $n \geq 1, q\geq 2, \beta \geq 3$ such that
$2n^{2} = q^{\beta}+1$ and $3 \nmid \beta$.
\end{lemma}

\begin{proof}
It is sufficient to prove the statement under the condition that
$\beta = 4$ or $\beta$ is an odd prime.
The statement for $\beta = 4$ is a consequence of the fact that
the equation $2y^{2} = x^{4}+1$ has only four rational solutions $(x, y) = (\pm 1, \pm 1)$,
which one can check in a standard manner.
The statement for $\beta = 5$ is a consequence of \cite{Bruin_thesis},
which states that if $(x, y, z)$ is a primitive non-trivial triple of integers
\footnote{
	Here and after,
	``primitive'' means that $\gcd(x, y, z) = 1$
	and ``non-trivial'' means that $xyz \neq 0$.
	}
satisfies $x^{5}+y^{5} = 2z^{2}$, then $(x, y, z) = (3, -1, 11), (-1, 3, 11), (1, 1, 1)$.
\footnote{
	In fact,
	one can check that the integral solutions of $2n^{2} = q^{5}+1$
	are $(n, q) = (\pm1, 1), (0, -1)$ e.g.\ by applying the command \texttt{Chabauty} in Magma \cite{Magma}.
	}
Similarly, the statement for a prime $\beta \geq 7$ is a consequence of \cite{Ivorra_pp2},
which states that if $(x, y, z)$ is a primitive non-trivial triple of integers
satisfies $x^{\beta}+y^{\beta} = 2z^{2}$, then $(x, y, z) = (1, 1, 1)$.
This completes the proof. 
\end{proof}

\begin{remark} \label{smaller_beta}
If $\beta = 3$,
then the equation $2y^{2} = x^{\beta}+1$ 
has only five integral solutions $(x, y) = (-1, 0), (1, \pm1), (23, \pm78)$,
which one can check in a standard manner;
one can check it e.g.\ by applying the command \texttt{IntegralPoints} in Magma \cite{Magma}.
\end{remark}

Using \cref{ternary},
we can prove \cref{main2} by contradiction in a quite elementary manner. 

\begin{proof}[Proof of \cref{main2}]
By the same argument as \cref{main1},
there exist some (coprime) divisors $n_{1}, n_{2}$ of $n$ such that 
\[
	\begin{cases}
	(q-1)l^{\delta}n_{1}^{2} = q^{\beta}-1, \\
	2l^{1-\delta}n_{2}^{2} = q^{\beta}+1,
	\end{cases}
\]
where $\beta := \frac{\alpha+1}{2} \geq 3$ and $\delta \in \{ 0, 1 \}$.
In view of \cref{by-product},
we may assume that $\delta = 1$.
Moreover,
since $q > 1$,
\cref{ternary} implies that $3 \mid \beta$.
In what follows,
However,
since we assume that $q > 1$ and $q \equiv 1 \bmod 4$,
it contradicts \cref{smaller_beta}.
This completes the proof.
\end{proof}

\section{Application to multiply perfect numbers}

A positive integer $N$ is called $k$-perfect if $\sigma(N) = kN$.
The classical perfect numbers are $2$-perfect in this sense.
In \cite{Chen-Luo}, Chen and Luo generalized Euler's theorem on odd $2$-perfect numbers to odd $k$-perfect numbers with even $k$ as follows.
In what follows, $v_{2}(k)$ denotes the integer $v$ such that $2^{v} \mid k$ but $2^{v+1} \nmid k$.

\begin{theorem}[{cf.\ \cite[Theorem 2.2]{Chen-Luo}} \label{Chen-Luo}
\footnote{
	Note that the statement of \cite[Theorem 2.2]{Chen-Luo} is incorrect.
	Here, the author recovered a correct statement from its proof.
	}
]
Let $k \geq 2$ be an even integer and $N$ be an odd $k$-perfect number.
Then, there exist an integer $s$ such that $1 \leq s \leq v_{2}(k)$,
some integers $n, \alpha_{1}, \dots, \alpha_{s}$,
and distinct primes $q_{1}, \dots, q_{s}$ such that
\[
	N = n^{2}q_{1}^{\alpha_{1}} \cdots q_{s}^{\alpha_{s}}
	\quad \text{and}
	\quad q_{i} \nmid n.
\]
Moreover,
there exist some non-negative integers $a_{1}, \dots, a_{s}, b_{1}, \dots, b_{s}$ such that
\[
	v_{2}(k) - s = \sum_{i = 1}^{s} a_{i} + \sum_{i = 1}^{s} b_{i},
	\quad p_{i} \equiv 2^{a_{i}+1}-1 \bmod 2^{a_{i}+2},
	\quad \text{and}
	\quad \alpha_{i} \equiv 2^{b_{i}+1}-1 \bmod 2^{b_{i}+2}.
\]
\end{theorem}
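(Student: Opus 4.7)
The plan is to reduce the statement to a $2$-adic analysis of the multiplicative formula $\sigma(N) = \prod_{p \mid N} \sigma(p^{e_{p}})$ where $N = \prod p^{e_{p}}$ is odd. Since $N$ is odd and $\sigma(N) = kN$, we have $v_{2}(\sigma(N)) = v_{2}(k)$. The task then becomes tracking how each factor $\sigma(p^{e_{p}})$ contributes to $v_{2}(\sigma(N))$ and matching these contributions with the claimed congruence conditions on the $q_{i}$ and $\alpha_{i}$.

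First I would perform the local computation: for an odd prime $p$ and $e \geq 0$, determine $v_{2}(\sigma(p^{e})) = v_{2}\!\left( (p^{e+1}-1)/(p-1) \right)$. If $e$ is even then $e+1$ is odd, and the standard lifting-the-exponent identity gives $v_{2}(p^{e+1}-1) = v_{2}(p-1)$, so $\sigma(p^{e})$ is odd. If $e$ is odd then $e+1$ is even, and LTE at the prime $2$ yields $v_{2}(p^{e+1}-1) = v_{2}(p-1) + v_{2}(p+1) + v_{2}(e+1) - 1$, hence $v_{2}(\sigma(p^{e})) = v_{2}(p+1) + v_{2}(e+1) - 1$. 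Setting $a := v_{2}(p+1) - 1 \geq 0$ and $b := v_{2}(e+1) - 1 \geq 0$, this contribution equals $a+b+1$, and the requirements $v_{2}(p+1) = a+1$, $v_{2}(e+1) = b+1$ are exactly the congruences $p \equiv 2^{a+1}-1 \pmod{2^{a+2}}$ and $e \equiv 2^{b+1}-1 \pmod{2^{b+2}}$.

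Next I would assemble the global picture. Let $q_{1}, \dots, q_{s}$ be the prime divisors of $N$ whose exponents $\alpha_{1}, \dots, \alpha_{s}$ are odd, and collect the remaining prime divisors (each with an even exponent) into a factor $n^{2}$, so that $N = n^{2} q_{1}^{\alpha_{1}} \cdots q_{s}^{\alpha_{s}}$ with $q_{i} \nmid n$. By the local computation, the primes contributing to $n^{2}$ leave $v_{2}(\sigma(N))$ unchanged, so attaching $(a_{i}, b_{i})$ to $(q_{i}, \alpha_{i})$ as above gives
\[
v_{2}(k) \;=\; v_{2}(\sigma(N)) \;=\; \sum_{i=1}^{s} (a_{i}+b_{i}+1) \;=\; s + \sum_{i=1}^{s} a_{i} + \sum_{i=1}^{s} b_{i},
\]
which is exactly the identity $v_{2}(k) - s = \sum a_{i} + \sum b_{i}$. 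The bounds $1 \leq s \leq v_{2}(k)$ come for free: $s \leq v_{2}(k)$ since all summands on the right are non-negative, while $s \geq 1$ because $v_{2}(k) \geq 1$ forces at least one exponent $\alpha_{i}$ to be odd.

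I do not expect a serious obstacle; the only delicate step is invoking the correct form of lifting-the-exponent, whose hypothesis ``$p$ odd'' is automatic since $N$ is odd. The rest is bookkeeping, and the congruences on $q_{i}$ and $\alpha_{i}$ are merely rewritings of $v_{2}(q_{i}+1) = a_{i}+1$ and $v_{2}(\alpha_{i}+1) = b_{i}+1$. In light of the footnote about the misstatement in \cite{Chen-Luo}, I would cross-check the edge cases (small $s$ and small $v_{2}(k)$) by direct computation before declaring the proof finished.
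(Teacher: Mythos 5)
Your argument is correct and complete. The paper itself gives no proof of this statement---it is quoted from Chen--Luo with only a footnote noting that the published statement there had to be corrected---so there is nothing to compare against line by line; your $2$-adic bookkeeping via $v_{2}(\sigma(N)) = v_{2}(k)$ and the lifting-the-exponent computation of $v_{2}(\sigma(p^{e}))$ is exactly the natural route and recovers the corrected statement (note that the $p_{i}$ in the displayed congruence is a typo for $q_{i}$, which your proof implicitly fixes).
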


\begin{corollary} \label{Chen-Luo2}
Let $k$ be an integer such that $k \equiv 2 \bmod 4$ and $N$ be an odd $k$-perfect number.
Then, there exist some integers $n, \alpha$ and a prime $q$ such that
\[
	N = n^{2}q^{\alpha},
	\quad q \nmid n,
	\quad q \equiv \alpha \equiv 1 \bmod 4.
\]
\end{corollary}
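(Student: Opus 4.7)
The plan is to apply \cref{Chen-Luo} directly, exploiting the very restrictive hypothesis $k \equiv 2 \bmod 4$, which forces $v_{2}(k) = 1$. First I would invoke the theorem to obtain an integer $s$ with $1 \leq s \leq v_{2}(k) = 1$; the inequalities leave no choice but $s = 1$. This already yields a decomposition $N = n^{2} q^{\alpha}$ with $q \nmid n$ for a single prime $q = q_{1}$ and exponent $\alpha = \alpha_{1}$, which is the shape required by the corollary.

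Next, I would extract the congruence information. The theorem provides non-negative integers $a_{1}, b_{1}$ with
\[
	v_{2}(k) - s = a_{1} + b_{1}.
\]
With $v_{2}(k) = s = 1$, the left-hand side is $0$, so necessarily $a_{1} = b_{1} = 0$. Substituting these values into the congruences $q \equiv 2^{a_{1}+1} - 1 \bmod 2^{a_{1}+2}$ and $\alpha \equiv 2^{b_{1}+1} - 1 \bmod 2^{b_{1}+2}$ supplied by the theorem (reading $p_{1}$ there as $q_{1}$, as signaled by the footnote about the erratum in \cite{Chen-Luo}) gives $q \equiv 1 \bmod 4$ and $\alpha \equiv 1 \bmod 4$, which is exactly the conclusion.

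Since the whole argument is a direct specialization of \cref{Chen-Luo} with the minimal possible value of $v_{2}(k)$, I do not anticipate any genuine obstacle; there is no case analysis to perform and no auxiliary result to invoke. The only thing to be mindful of is the notational slip in \cite{Chen-Luo} flagged in the footnote, which I would handle by simply writing the proof in terms of $q_{1}$ throughout so that no ambiguity remains for the reader.
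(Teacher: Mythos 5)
Your proof is correct and is exactly the specialization the paper intends: the corollary is stated without proof precisely because $k \equiv 2 \bmod 4$ forces $v_{2}(k) = 1$, hence $s = 1$ and $a_{1} = b_{1} = 0$ in \cref{Chen-Luo}, yielding $q \equiv \alpha \equiv 1 \bmod 4$. You also correctly handle the $p_{i}$ versus $q_{i}$ notational slip flagged in the footnote.
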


Thanks to \cref{Chen-Luo2},
we can prove the following by the same argument as \cref{2-perfect}.

\begin{theorem} \label{k-perfect}
Let us keep the notation and conditions in \cref{Chen-Luo2}.
\begin{enumerate}
\item
If $\alpha = 1$, then $\frac{\sigma(n^{2})}{q} > 1$. 

\item
If $\alpha > 1$ and $\frac{k}{2}$ is a square, then $\frac{\sigma(n^{2})}{q^{\alpha}}$ is neither a square nor a square times a single prime.

\item
If $\alpha > 1$ and $\frac{k}{2}$ is a square times a single prime, then $\frac{\sigma(n^{2})}{q^{\alpha}}$ is not a square.
\end{enumerate}
In any case, $\sigma(n^{2}) \neq q^{\alpha}$.
\end{theorem}

\begin{proof}
For $\alpha = 1$, we use the lower bound $\omega(n) \geq k^{2}-2$ due to McCarthy \cite[Corollary]{McCarthy}.
For $\alpha > 1$,
it is sufficient to note that
\[
	2 \cdot \frac{\frac{k}{2} \cdot n^{2}}{\frac{\sigma(n^{2})}{q^{\alpha}}} = \sigma(q^{\alpha}),
\]
which contradicts \cref{main1,main2}.
\end{proof}

Here, it should be mentioned that
the result \cite[Theorem 1]{Dandapat-Hunsucker-Pomerance} of Dandapat, Hunsucker, and Pomerance can be restated as follows.

\begin{theorem} [{\cite[Corollary]{Dandapat-Hunsucker-Pomerance}}] \label{DHP}
Let $N = mq^{\alpha}$ be a $k$-perfect number with an integer $m$ and a prime $q$ such that $\sigma(m) = q^{\alpha}$ and  $q \nmid m$.
Then, $N = 672$ or an even perfect number.
\end{theorem}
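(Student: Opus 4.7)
The plan is to exploit the two coupled equations
\[
\sigma(m) = q^{\alpha}, \qquad \sigma(q^{\alpha}) = km,
\]
the second coming from $\sigma(N) = kN$ and $\gcd(m,q) = 1$, and to split the analysis on the parity of $N$. If $N$ is odd, then $q$ and $m$ are odd, so $\sigma(m) = q^{\alpha}$ is odd; since for an odd prime $p$ the quantity $\sigma(p^{a})$ is odd exactly when $a$ is even, every exponent in the prime factorization of $m$ must be even, and $m = n^{2}$ for some odd $n$. When $k = 2$ this is excluded by \cref{2-perfect}. For even $k \geq 2$ one invokes \cref{Chen-Luo} to see that $m$ must carry the full square part of $N$, so the obstruction of \cref{main} applies again; for odd $k$, matching $v_{2}(\sigma(N)) = v_{2}(k) = 0$ with the parity of $\sigma(q^{\alpha}) = kn^{2}$ forces $\alpha$ to be even and places us in the situation $n^{2} = \sigma(q^{\alpha})$ ruled out by \cref{by-product}. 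Thus $N$ must be even.

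For even $N$ with $q$ odd, we have $2 \mid m$; writing $m = 2^{t}m'$ with $m'$ odd and $t \geq 1$ gives $(2^{t+1}-1)\sigma(m') = q^{\alpha}$, forcing $2^{t+1}-1 = q^{c}$ for some $c \geq 1$. Mihailescu's theorem rules out $c \geq 2$, so $q = 2^{t+1}-1$ is itself a Mersenne prime and $\sigma(m') = q^{\alpha-1}$ with $m'$ odd coprime to $q$. A direct case analysis on when $\sigma(p^{a})$ can equal a power of the fixed odd prime $q$ (using that $\sigma(p) = p+1$ is even and so cannot equal an odd prime power, while higher $a$ are ruled out by factorization of $\sigma(p^{a})$) forces $m' = 1$ and $\alpha = 1$, yielding the classical even perfect numbers $N = 2^{p-1}(2^{p}-1)$.

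In the remaining subcase $q = 2$, $\sigma(m)$ is a power of $2$ with $m$ odd; each prime power $p^{a} \parallel m$ therefore satisfies $\sigma(p^{a}) = 2^{c}$, and a parity/factorization argument (parity rules out even $a$, and for odd $a \geq 3$ the cyclotomic factorization $\sigma(p^{a}) = \prod_{d \mid a+1,\, d > 1} \Phi_{d}(p)$ produces an odd factor $>1$) forces $a = 1$. Thus $m = \prod_{i=1}^{r} p_{i}$ is a squarefree product of distinct Mersenne primes $p_{i} = 2^{b_{i}}-1$ with $\alpha = \sum b_{i}$, and the integrality of $k = (2^{\alpha+1}-1)/\prod(2^{b_{i}}-1)$ translates via the order of $2$ modulo each prime factor to the condition $b_{i} \mid \alpha+1$ for every $i$. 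Since the $b_{i}$ are distinct primes,
\[
\prod_{i} b_{i} = \mathrm{lcm}(b_{i}) \mid \alpha+1 = \sum_{j} b_{j}+1,
\]
so $\prod_{i} b_{i} \leq \sum_{j} b_{j} + 1$, which forces $r \leq 2$; then $r = 1$ reduces to $b \mid b+1$ (no prime solution), and $r = 2$ pins down $(b_{1},b_{2}) = (2,3)$, yielding $N = 21 \cdot 32 = 672$. The main obstacle I anticipate is precisely this finiteness/exceptional analysis, both in the $q$ odd reduction $\sigma(m') = q^{\alpha-1}$ and in the Mersenne-product bookkeeping; the remaining reductions are essentially parity arguments and appeals to the paper's earlier results.
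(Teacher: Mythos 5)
The paper does not actually prove \cref{DHP}; it is quoted with a citation to Dandapat--Hunsucker--Pomerance, so your attempt has to stand on its own. Your $q=2$ branch does: the reduction to squarefree $m$ built from distinct Mersenne primes $p_i = 2^{b_i}-1$, the divisibility $b_i \mid \alpha+1$ coming from the order of $2$ modulo $p_i$, and the bound $\prod b_i \le \sum b_i + 1$ forcing $r \le 2$ and then $(b_1,b_2)=(2,3)$, i.e.\ $N = 672$, is correct and complete. The other two branches have genuine gaps.

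In the odd-$N$ branch, the appeal to \cref{main} and \cref{by-product} needs hypotheses you have not secured. From $\sigma(q^{\alpha}) = km = kn^{2}$ you only reach the equation $2(n')^{2} = \sigma(q^{\alpha})$ of \cref{main} when $k/2$ is a square, and the equation $(n')^{2} = \sigma(q^{\alpha})$ of \cref{by-product} when $k$ is a square; moreover both results require $q \equiv 1 \bmod 4$, which \cref{Chen-Luo} does not guarantee once $v_{2}(k) \ge 2$ (for $k=4$ it permits $q \equiv 3 \bmod 8$), and which is unavailable for odd $k$. So ``the obstruction of \cref{main} applies again'' is unjustified for general $k$ --- indeed \cref{k-perfect} in this paper is proved only under the extra hypothesis that $k/2$ is a square, precisely for this reason. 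The even-$N$, odd-$q$ branch has a more serious problem: the claim that a ``direct case analysis'' on $\sigma(p^{a}) = q^{c}$ forces $m'=1$ is false. Parity only forces each exponent $a$ with $p^{a} \parallel m'$ to be even; it does not eliminate $a \ge 2$, since $\sigma(p^{a})$ with $a$ even can be a power of an odd prime: $\sigma(3^{2}) = 13$, $\sigma(5^{2}) = 31$ (a Mersenne prime, so even your constraint $q = 2^{t+1}-1$ does not exclude it), $\sigma(3^{4}) = 11^{2}$. Ruling such configurations out is a Nagell--Ljunggren-type problem and cannot be done by ``factorization of $\sigma(p^{a})$'' alone; one must exploit the second equation $\sigma(q^{\alpha}) = km$, which your argument never uses in this branch. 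That interplay is where the substance of the Dandapat--Hunsucker--Pomerance theorem lies, so the proposal as written does not establish the statement.
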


Thus, by combining \cref{DHP} and \cref{Chen-Luo2},
we see that $\frac{\sigma(n^{2})}{q^{\alpha}} \neq 1$ for every odd $k$-perfect number $N = n^{2}q^{\alpha}$ with $k \equiv 2 \bmod 4$.
In this view,
\cref{k-perfect} is a refinement of this result under the condition that $N$ is odd and $\frac{k}{2}$ is a square.

\appendix
\section*{Acknowledgement}
The author would like to thank Takato Suzuki for valuable discussion on perfect numbers.
The author sincerely appreciate helpful comments on a draft from Tomokazu Kashio, Hayato Matsumoto, Ryusei Suzuki, and Naoaki Takada.

\begin{bibdiv}
\begin{biblist}
\bibselect{perfect}
\end{biblist}
\end{bibdiv}

\end{document}